\documentclass[11pt]{article}

\usepackage{amssymb,amsmath,amsfonts,amsthm}
\usepackage{latexsym}
\usepackage{graphics}
\usepackage{indentfirst}
\usepackage{hyperref}
\usepackage{comment}
\usepackage[shortlabels]{enumitem}
\usepackage[english]{babel}
\usepackage{tikz}
\usepackage{esdiff}



\usetikzlibrary{arrows.meta}

\setlength{\textwidth}{15.5cm} \setlength{\headheight}{0.5cm} \setlength{\textheight}{21.5cm}
\setlength{\oddsidemargin}{0.25cm} \setlength{\evensidemargin}{0.25cm} \setlength{\topskip}{0.5cm}
\setlength{\footskip}{1.5cm} \setlength{\headsep}{0cm} \setlength{\topmargin}{0.5cm}

\newtheorem*{thm*}{Theorem}
\newtheorem{thm}{Theorem}
\newtheorem{lem}[thm]{Lemma}

\newtheorem{conj}[thm]{Conjecture}

\newtheorem{ques}[thm]{Question}

\newcommand{\CC}{\mathcal{C}}

\newcommand{\N}{\mathbb{N}}

\newcommand{\R}{\mathbb{R}}

\allowdisplaybreaks

\begin{document}

\title{On the List Color Function Threshold}

\author{Hemanshu Kaul$^1$, Akash Kumar$^2$, Jeffrey A. Mudrock$^3$, Patrick Rewers$^2$, \\ Paul Shin$^2$, and Khue To$^2$}

\footnotetext[1]{Department of Applied Mathematics, Illinois Institute of Technology, Chicago, IL 60616. E-mail: {\tt {kaul@iit.edu}}}

\footnotetext[2]{Department of Mathematics, College of Lake County, Grayslake, IL 60030.}

\footnotetext[3]{Department of Mathematics and Statistics, University of South Alabama, Mobile, AL 36688.  E-mail:  {\tt {mudrock@southalabama.edu}}}

\maketitle

\begin{abstract}

The chromatic polynomial of a graph $G$, denoted $P(G,m)$, is equal to the number of proper $m$-colorings of $G$.  The list color function of graph $G$, denoted $P_{\ell}(G,m)$, is a list analogue of the chromatic polynomial that has been studied since the early 1990s, primarily through comparisons with the corresponding chromatic polynomial.  It is known that for any graph $G$ there is a $k \in \N$ such that $P_\ell(G,m) = P(G,m)$ whenever $m \geq k$.   The \emph{list color function threshold of $G$}, denoted $\tau(G)$, is the smallest $k \geq \chi(G)$ such that $P_{\ell}(G,m) = P(G,m)$ whenever $m \geq k$.  In 2009, Thomassen asked whether there is a universal constant $\alpha$ such that for any graph $G$, $\tau(G) \leq \chi_{\ell}(G) + \alpha$, where $\chi_{\ell}(G)$ is the list chromatic number of $G$.  We show that the answer to this question is no by proving that there exists a positive constant $C$ such that $\tau(K_{2,l}) - \chi_{\ell}(K_{2,l}) \ge C\sqrt{l}$ for $l \ge 16$.

\medskip

\noindent {\bf Keywords.} list coloring, chromatic polynomial, list color function.

\noindent \textbf{Mathematics Subject Classification.} 05C15, 05C30

\end{abstract}

\section{Introduction}\label{intro}

In this paper all graphs are nonempty, finite, simple graphs unless otherwise noted.  Generally speaking we follow West~\cite{W01} for terminology and notation.  The set of natural numbers is $\N = \{1,2,3, \ldots \}$.  For $m \in \N$, we write $[m]$ for the set $\{1, \ldots, m \}$.  If $G$ is a graph and $S \subseteq V(G)$, we use $G[S]$ for the subgraph of $G$ induced by $S$.  If $u$ and $v$ are adjacent in $G$, $uv$ or $vu$ refers to the edge between $u$ and $v$.  We write $K_{n,l}$ for complete bipartite graphs with partite sets of size $n$ and $l$.  If $G$ and $H$ are vertex disjoint graphs, we write $G \vee H$ for the join of $G$ and $H$.

\subsection{List Coloring and Counting List Colorings} \label{basic}

In classical vertex coloring one wishes to color the vertices of a graph $G$ with up to $m$ colors from $[m]$ so that adjacent vertices in $G$ receive different colors, a so-called \emph{proper $m$-coloring}.  The \emph{chromatic number} of a graph, denoted $\chi(G)$, is the smallest $m$ such that $G$ has a proper $m$-coloring.  List coloring is a generalization of classical vertex coloring introduced independently by Vizing~\cite{V76} and Erd\H{o}s, Rubin, and Taylor~\cite{ET79} in the 1970s.  In list coloring, we associate a \emph{list assignment} $L$ with a graph $G$ so that each vertex $v \in V(G)$ is assigned a list of available colors $L(v)$ (we say $L$ is a list assignment for $G$).  We say $G$ is \emph{$L$-colorable} if there is a proper coloring $f$ of $G$ such that $f(v) \in L(v)$ for each $v \in V(G)$ (we refer to $f$ as a \emph{proper $L$-coloring} of $G$).  A list assignment $L$ is called a \emph{$k$-assignment} for $G$ if $|L(v)|=k$ for each $v \in V(G)$.  We say $G$ is \emph{$k$-choosable} if $G$ is $L$-colorable whenever $L$ is a $k$-assignment for $G$.  The \emph{list chromatic number} of a graph $G$, denoted $\chi_\ell(G)$, is the smallest $k$ such that $G$ is $k$-choosable.  It is immediately obvious that for any graph $G$, $\chi(G) \leq \chi_\ell(G)$.  Moreover, it is well-known that the gap between the chromatic number and list chromatic number of a graph can be arbitrarily large as the following result illustrates (see e.g.,~\cite{KM216} for further details).  

\begin{thm} [Folklore] \label{thm: listbipartite}
For $n \in \N$, $\chi_\ell(K_{n,l})=n+1$ if and only if $l \geq n^n$.
\end{thm}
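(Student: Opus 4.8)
The plan is to let $A=\{a_1,\dots,a_n\}$ and $B$ denote the two partite sets of $K_{n,l}$, with $|B|=l$, and to exploit the simple structure that $A$ and $B$ are independent sets with every $A$--$B$ pair adjacent. The whole argument rests on one observation: if $L$ is a list assignment and $f$ is any choice of colors $f(a_i)\in L(a_i)$ (automatically proper on the independent set $A$), then $f$ extends to a proper $L$-coloring of $K_{n,l}$ if and only if $L(b)\not\subseteq f(A)$ for every $b\in B$, where $f(A)=\{f(a_1),\dots,f(a_n)\}$; indeed each $b$ is adjacent to all of $A$ and to no other vertex of $B$, so $b$ can be colored precisely when $L(b)\setminus f(A)\neq\varnothing$, and the vertices of $B$ are colored independently. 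From this I immediately obtain the universal upper bound $\chi_\ell(K_{n,l})\le n+1$: given any $(n+1)$-assignment, color $A$ arbitrarily, so that $|f(A)|\le n<n+1=|L(b)|$ forces $L(b)\not\subseteq f(A)$ for each $b$, and the coloring extends.

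Next I would prove that $l\ge n^n$ forces $\chi_\ell(K_{n,l})\ge n+1$ (hence equality). Since $\chi_\ell$ is monotone under taking subgraphs and $K_{n,n^n}\subseteq K_{n,l}$ when $l\ge n^n$, it suffices to exhibit an $n$-assignment on $K_{n,n^n}$ with no proper coloring. I would take pairwise disjoint color sets $C_1,\dots,C_n$ with $|C_i|=n$ and set $L(a_i)=C_i$. The colorings of $A$ are then exactly the $n^n$ transversals, each with image of size exactly $n$, and distinct transversals give distinct images. Assigning the $n^n$ vertices of $B$ the $n^n$ transversal sets as their lists, every coloring $f$ of $A$ has $f(A)$ equal to some transversal, and the vertex of $B$ carrying that list cannot be colored. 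Hence the assignment has no proper coloring and $K_{n,l}$ is not $n$-choosable.

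The remaining and most delicate direction is that $l<n^n$ implies $K_{n,l}$ is $n$-choosable. The naive union bound fails here, since a single $b$ can be blocked by up to $n!$ colorings of $A$, so that $\sum_{b}(\text{number of blockers})$ may exceed the total number $n^n$ of colorings; I would instead split on the structure of the lists $L(a_1),\dots,L(a_n)$. If two of them share a color $c$, I color those two vertices $c$ and the rest arbitrarily; then $|f(A)|\le n-1<n=|L(b)|$, so no $b$ is blocked and the coloring extends regardless of $l$. In the complementary case the lists are pairwise disjoint, so every coloring of $A$ is a transversal with image of size exactly $n$ and with distinct images, and a vertex $b$ blocks such a coloring only when $L(b)$ equals its image; thus each $b$ blocks at most one of the $n^n$ colorings. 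Since $l<n^n$, at least one coloring of $A$ is blocked by no $b\in B$ and therefore extends to a proper $L$-coloring. I expect this case analysis, and in particular the observation that any non-injective coloring of $A$ is automatically safe, which is exactly what lets me sidestep the lossy union bound, to be the crux of the argument.
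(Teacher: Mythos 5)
Your proof is correct and complete. The paper states this result as folklore and does not prove it (it defers to a reference), but the ``bad'' $n$-assignment you construct for the $l \ge n^n$ direction --- pairwise disjoint lists $C_1,\dots,C_n$ of size $n$ on the small side and the $n^n$ transversals as lists on the large side --- is exactly the assignment the paper generalizes in Lemma~\ref{lem: K-n-n^nt-construction-list-formula}. Your handling of the converse direction ($l < n^n$ implies $n$-choosability) is the standard argument and is sound: the case split on whether the lists on the small side intersect is precisely what is needed, since a repeated color makes every vertex of $B$ colorable, while pairwise disjoint lists force distinct colorings of $A$ to have distinct images, so each vertex of $B$ can obstruct at most one of the $n^n$ colorings.
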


In 1912 Birkhoff~\cite{B12} introduced the notion of the chromatic polynomial with the hope of using it to make progress on the four color problem.  For $m \in \N$, the \emph{chromatic polynomial} of a graph $G$, $P(G,m)$, is the number of proper $m$-colorings of $G$. It is well-known that $P(G,m)$ is a polynomial in $m$ of degree $|V(G)|$ (e.g., see~\cite{DKT05}). For example, $P(K_n,m) = \prod_{i=0}^{n-1} (m-i)$, $P(C_n,m) = (m-1)^n + (-1)^n (m-1)$, $P(T,m) = m(m-1)^{n-1}$ whenever $T$ is a tree on $n$ vertices, and $P(K_{2,l},m) = m(m-1)^l + m(m-1)(m-2)^l$ (see~\cite{B94} and~\cite{W01}). 

The notion of chromatic polynomial was extended to list coloring in the early 1990s by Kostochka and Sidorenko~\cite{AS90}.  If $L$ is a list assignment for $G$, we use $P(G,L)$ to denote the number of proper $L$-colorings of $G$. The \emph{list color function} $P_\ell(G,m)$ is the minimum value of $P(G,L)$ where the minimum is taken over all possible $m$-assignments $L$ for $G$.  Since an $m$-assignment could assign the same $m$ colors to every vertex in a graph, it is clear that $P_\ell(G,m) \leq P(G,m)$ for each $m \in \N$.  In general, the list color function can differ significantly from the chromatic polynomial for small values of $m$.  One reason for this is that a graph can have a list chromatic number that is much higher than its chromatic number.   On the other hand,  in 1992, Donner~\cite{D92} showed that for any graph $G$ there is a $k \in \N$ such that $P_\ell(G,m) = P(G,m)$ whenever $m \geq k$. 

It is also known that $P_{\ell}(G,m)=P(G,m)$ for all $m \in \N$ when $G$ is a cycle or chordal  (see~\cite{AS90} and~\cite{KN16}).  Moreover, if $P_{\ell}(G,m)=P(G,m)$ for all $m \in \N$, then $P_{\ell}(G \vee K_n,m)=P(G \vee K_n,m)$ for each $n, m \in \N$ (see~\cite{KM18}). 

\subsection{The List Color Function Threshold} 

We now introduce a notion that has received some attention (under different names) in the literature~\footnote{It is worth mentioning that a DP-coloring (see~\cite{DP15}) analogue of the list color function threshold was recently introduced and studied in~\cite{BH21}.}.  Given any graph $G$, the \emph{list color function number of $G$}, denoted $\nu(G)$, is the smallest $t \geq \chi(G)$ such that $P_{\ell}(G,t) = P(G,t)$.  The \emph{list color function threshold of $G$}, denoted $\tau(G)$, is the smallest $k \geq \chi(G)$ such that $P_{\ell}(G,m) = P(G,m)$ whenever $m \geq k$.  By Donner's 1992 result, we know that both $\nu(G)$ and $\tau(G)$ are well-defined for any graph $G$. Furthermore, $\chi(G) \leq \chi_\ell(G) \leq \nu(G) \leq \tau(G)$. 

In 2009, Thomassen~\cite{T09} showed that for any graph $G$, $\tau(G) \leq |V(G)|^{10} + 1$.  Then, in 2017, Wang, Qian, and Yan~\cite{WQ17} showed that for any graph $G$, $\tau(G) \leq (|E(G)|-1)/\ln(1+ \sqrt{2}) + 1$.  Two well-known open questions on the list color function can be stated using the list color function number and list color function threshold.

\begin{ques} [Kirov and Naimi~\cite{KN16}] \label{ques: threshold}
	For every graph $G$, is it the case that $\nu(G) = \tau(G)$?
\end{ques}   

\begin{ques} [Thomassen~\cite{T09}] \label{ques: universal}
	Is there a universal constant $\alpha$ such that for any graph $G$, $\tau(G) - \chi_{\ell}(G) \le \alpha$?
\end{ques}

Question~\ref{ques: threshold}, which is asking whether the list color function of a graph and the corresponding chromatic polynomial stay the same after the first point at which they are both nonzero and equal, remains open. However, the DP-coloring analogue of Question~\ref{ques: threshold} was answered in the negative in~\cite{BK21} where it was studied under the notion of chromatic adherence (see~\cite{KM19} for an introduction to the DP color function, the DP-coloring analogue of list color function). 

In \cite{T09}, it was shown that in Question~\ref{ques: universal}, $\alpha$  cannot be zero. In this paper we show that the answer to Question~\ref{ques: universal} is no in a fairly strong sense.  Specifically, we prove the following.

\begin{thm} \label{thm: K2l}
Suppose $G= K_{2,l}$ and $l\geq16$. Let $q = \lfloor l/4\rfloor$. Then, 
$$\tau(G)>\left\lfloor\left(\frac{q}{\ln(16/7)}\right)^{1/2} + 1\right\rfloor.$$
Consequently, there is a constant $C >0$ such that for each $l \geq 16$, $\tau(K_{2,l}) - \chi_{\ell}(K_{2,l}) = \tau(K_{2,l}) - 3 \geq C \sqrt{l}$. 
\end{thm}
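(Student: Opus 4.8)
The plan is to convert the claim into the construction of a single economical list assignment. Recall that $\tau(G)$ is the least $k$ beyond which $P_\ell(G,m)=P(G,m)$ holds for all $m\ge k$; hence if I can produce one integer $m$ together with an $m$-assignment $L$ for $G=K_{2,l}$ satisfying $P(G,L)<P(G,m)$, then $P_\ell(G,m)<P(G,m)$, so no threshold $k\le m$ can work and $\tau(G)>m$. Thus it suffices to exhibit, for $m$ as large as $m^\ast:=\lfloor (q/\ln(16/7))^{1/2}+1\rfloor$, an $m$-assignment beating the chromatic polynomial. I would take $m=m^\ast$ and check $P(K_{2,l},L)<P(K_{2,l},m^\ast)$, using that the definition of $m^\ast$ forces $(m^\ast-1)^2\le q/\ln(16/7)$ and hence $4\ln(16/7)(m^\ast-1)^2\le 4q\le l$.

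For the construction, write the partite sets as $\{u_1,u_2\}$ and $\{w_1,\dots,w_l\}$, so that $P(G,L)=\sum_{a\in L(u_1)}\sum_{b\in L(u_2)}\prod_{j=1}^{l}|L(w_j)\setminus\{a,b\}|$ and the ordinary polynomial $P(K_{2,l},m)=m(m-1)^l+m(m-1)(m-2)^l$ is dominated for large $l$ by the ``$u_1,u_2$ receive the same color'' term $m(m-1)^l$. A motivating observation is that identical hub lists cannot beat the polynomial (the same-color pairs alone already contribute $m(m-1)^l$, matching the leading term), which forces us to separate the two hub lists. I would let $L(u_1)$ and $L(u_2)$ agree on a common block $I$ of $m-2$ colors and differ only on private pairs $X_1,X_2$ of two colors each, and give every leaf the block $I$ together with a two-element subset of $X_1\cup X_2$, distributing the leaves into four equal groups of size $q=\lfloor l/4\rfloor$ realizing the four ``mixed'' private pairs. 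This ties the same-color contribution to $|I|=m-2$ while arranging that the few hub color-pairs drawn from the private parts are blocked at both ends as evenly as possible.

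I would then evaluate $P(K_{2,l},L)$ by splitting the double sum over $(a,b)$ according to whether each of $a,b$ lies in $I$ or in a private part and whether $a=b$. Each pair-type contributes a clean power product: the $m-2$ same-color pairs each give $(m-1)^l$; the pairs with both colors in the private parts each give $[(m-2)m(m-1)^2]^{l/4}=(m-1)^l\,[1-(m-1)^{-2}]^{l/4}$; and the remaining mixed types give strictly smaller powers. Comparing with $P(K_{2,l},m)$, the leading $(m-1)^l$ contributions almost cancel, leaving a positive surplus of order $(m-1)^l$ against a negative term of order $(m-1)^l\,[1-(m-1)^{-2}]^{l/4}$ from the private pairs; hence $P(K_{2,l},L)<P(K_{2,l},m)$ as soon as $[1-(m-1)^{-2}]^{l/4}$ is small enough, i.e.\ once $l$ exceeds a fixed constant multiple of $(m-1)^2$. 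Tracking the coefficients and the subdominant terms turns this into the stated inequality (with $q=\lfloor l/4\rfloor$ reflecting the four leaf groups and the rate $\ln(16/7)$), so $m^\ast$ is a ``bad'' value and $\tau(K_{2,l})>m^\ast$. Finally, Theorem~\ref{thm: listbipartite} with $n=2$ gives $\chi_\ell(K_{2,l})=3$ for $l\ge4$, so for $l\ge16$ the gap is $\tau(K_{2,l})-3\ge m^\ast-2\ge C\sqrt{l}$ for a suitable constant $C$.

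The main obstacle is the construction together with its exact bookkeeping: one must choose the overlap size and the leaf groups so that the same-color term sits just below the leading term $m(m-1)^l$ while the ``both-private'' hub pairs — the only pairs whose leaf factor stays within a bounded ratio of $(m-1)^l$ — are genuinely driven below $(m-1)^l$ by the even double-blocking, and then show that all the cross-terms are negligible near the threshold so the comparison can be made with explicit constants rather than merely asymptotically.
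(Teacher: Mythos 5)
Your proposal follows essentially the same route as the paper: the same ``bad'' $m$-assignment (hubs sharing a common block of $m-2$ colors plus disjoint private pairs, leaves receiving the common block together with one of the four mixed private pairs, split as evenly as possible), the same pair-by-pair expansion of $P(G,L)$, and the same comparison in which the $(m-1)^l$ terms nearly cancel and the four both-private pairs supply a deficit of order $(m-1)^l\bigl[1-(m-1)^{-2}\bigr]^{l/4}$; your bookkeeping of the contributions matches the exact formula the paper derives in its Lemma~\ref{lem: K-n-n^nt-construction-list-formula} and uses in Lemma~\ref{lem;4t}. The one point your sketch passes over is the case $4\nmid l$: ``four equal groups of size $q=\lfloor l/4\rfloor$'' accounts for only $4q$ of the $l$ leaves, and once the group sizes $z_1,\dots,z_4$ are merely balanced rather than equal, the both-private pairs contribute four distinct products $(m-2)^{z_i}(m-1)^{z_j+z_k}m^{z_s}$ that must still be shown to sum to less than the available surplus. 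The paper handles this with a separate inductive step (Lemma~\ref{lem; balanced}, extending a balanced assignment from $K_{2,l}$ to $K_{2,l+1}$, plus Lemma~\ref{lem;threshsold}), which is a nontrivial chunk of the argument; your outline would need either that induction or a direct estimate for unequal balanced group sizes, e.g.\ via the inequality $b^{z_i}s^{z_j}+b^{z_j}s^{z_i}\le 2(bs)^{\min\{z_i,z_j\}}$ that the paper uses. Otherwise the plan, including the final deduction $\chi_\ell(K_{2,l})=3$ and $\tau-3\ge m^\ast-2\ge C\sqrt{l}$, is sound and matches the paper.
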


We have made no attempt to optimize the leading constant above. However, we believe that this lower bound captures the behavior of $\tau(K_{2,l})$.

\begin{conj} \label{conj: k2l}
	$\tau(K_{2,l}) = \Theta(\sqrt{l})$ as $l \rightarrow \infty$.
\end{conj}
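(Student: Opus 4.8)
The lower bound in $\tau(K_{2,l}) = \Theta(\sqrt{l})$ is already available: by Theorem~\ref{thm: listbipartite} we have $\chi_\ell(K_{2,l}) = 3$ for $l \ge 4$, so Theorem~\ref{thm: K2l} gives $\tau(K_{2,l}) - 3 \ge C\sqrt{l}$ and hence $\tau(K_{2,l}) = \Omega(\sqrt{l})$ for $l \ge 16$. The substance of the conjecture is therefore the matching upper bound $\tau(K_{2,l}) = O(\sqrt{l})$. The plan is to produce a constant $C'$ with the property that $P_\ell(K_{2,l}, m) = P(K_{2,l}, m)$ for every $m \ge C'\sqrt{l}$. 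Since $P_\ell \le P$ always holds, this reduces to proving the reverse inequality $P(K_{2,l}, L) \ge P(K_{2,l}, m)$ for every $m$-assignment $L$ once $m \ge C'\sqrt{l}$.

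First I would set up the counting. Writing the parts of $K_{2,l}$ as $\{u,v\}$ and $\{w_1, \dots, w_l\}$ and putting $A = L(u)$, $B = L(v)$, conditioning on the colors of $u$ and $v$ and extending independently over the $w_i$ gives the exact identity
\[
P(K_{2,l}, L) \;=\; \sum_{a \in A}\sum_{b \in B}\, \prod_{i=1}^{l} \bigl|L(w_i) \setminus \{a,b\}\bigr|,
\]
whose terms split into a diagonal part ($a = b \in A \cap B$) and an off-diagonal part ($a \ne b$). The uniform assignment $A = B = L(w_i) = [m]$ turns the right-hand side into exactly $m(m-1)^l + m(m-1)(m-2)^l = P(K_{2,l}, m)$. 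A clean first reduction is that with $A = B$ held fixed the uniform choice of the $L(w_i)$ already minimizes the count, since enlarging $L(w_i) \cap [m]$ only shrinks every factor term by term; consequently any assignment that beats uniform must strictly reduce the overlap $|A \cap B|$. This trades diagonal terms (each worth about $(m-1)^l$) for extra off-diagonal pairs (each worth about $(m-2)^l$), and the whole problem becomes quantifying this exchange.

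To quantify it I would pass to the logarithmic scale using the degrees $\deg(c) = |\{i : c \in L(w_i)\}|$ and co-occurrences $\mathrm{co}(a,b) = |\{i : a,b \in L(w_i)\}|$. A second-order expansion of the logarithm of each product yields
\[
\prod_{i=1}^{l}\bigl|L(w_i)\setminus\{a,b\}\bigr| = m^l \exp\!\left(-\frac{\deg(a)+\deg(b)}{m} - \frac{\tfrac12\bigl(\deg(a)+\deg(b)\bigr) + \mathrm{co}(a,b)}{m^2} + O\!\left(\frac{l}{m^3}\right)\right),
\]
and the double sum can then be controlled by convexity (Jensen applied to $c \mapsto e^{-\deg(c)/m}$) together with the global budgets $\sum_c \deg(c) = lm$ and $\sum_{a \in A, b \in B}\mathrm{co}(a,b) = \sum_{i=1}^{l}|L(w_i)\cap A|\,|L(w_i)\cap B|$. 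This is exactly where the scale $\sqrt{l}$ surfaces: balancing the lists between $A$ and $B$ forces the typical co-occurrence to be of order $l/4$, so the correction $\mathrm{co}(a,b)/m^2$ is $\Theta(l/m^2)$ --- which is $\Theta(1)$ when $m = \Theta(\sqrt{l})$ and $o(1)$ once $m \gg \sqrt{l}$. The factor $l/4$ here is the same quantity as $q = \lfloor l/4 \rfloor$ in Theorem~\ref{thm: K2l}, and the vanishing of this second-order term for $m \ge C'\sqrt{l}$ is what should drive every assignment back up to the uniform value $P(K_{2,l}, m)$.

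The hard part will be making this competition rigorous uniformly over all $L$. Two difficulties compound. First, the expansion above is only faithful when the $w_i$-lists are spread out, whereas an adversary may concentrate colors; so one needs an exact convexity or rearrangement argument, tight at the uniform assignment, rather than a merely asymptotic estimate, in order to collapse the infinite family of assignments onto a low-dimensional extremal profile that is symmetric in its degree and co-occurrence data. Second, one must control the diagonal-versus-off-diagonal exchange across the entire range $0 \le |A \cap B| \le m$, where neither endpoint is extremal and the optimum sits at an intermediate overlap. Carrying out both simultaneously --- ideally by isolating a single convexity inequality on the product form that pins the minimum at the uniform assignment precisely when $m \ge C'\sqrt{l}$ --- is the crux, and is the reason the upper bound, and hence the full $\Theta(\sqrt{l})$ statement, remains conjectural.
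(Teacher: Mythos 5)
You are attempting to prove Conjecture~\ref{conj: k2l}, which the paper explicitly leaves open: it has no proof there, and the only half that is known is the lower bound. Your first paragraph is correct and is exactly the paper's reasoning --- Theorem~\ref{thm: listbipartite} gives $\chi_\ell(K_{2,l}) = 3$ for $l \ge 4$, and Theorem~\ref{thm: K2l} then yields $\tau(K_{2,l}) = \Omega(\sqrt{l})$. Everything after that concerns the genuinely open half, $\tau(K_{2,l}) = O(\sqrt{l})$, and there your write-up is a research program rather than a proof, as you yourself concede in your final paragraph. So the proposal does not establish the statement; the gap is the entire upper bound.

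That said, several ingredients of your sketch are sound and one of your worries is misplaced, which is worth recording. The exact identity $P(K_{2,l},L) = \sum_{a \in A}\sum_{b \in B}\prod_{i=1}^{l}|L(w_i)\setminus\{a,b\}|$ is correct, and so is the reduction that when $A = B$ the uniform choice $L(w_i) = A$ minimizes every factor simultaneously (each factor is $m - |L(w_i)\cap\{a,b\}| \ge m-2$, with equality forcing $\{a,b\} \subseteq L(w_i)$), so any assignment beating $P(K_{2,l},m)$ must have $A \ne B$. Moreover, your expansion is in fact \emph{uniformly} valid, with no spread-out hypothesis needed: each factor is $m - x_i$ with $x_i = |L(w_i)\cap\{a,b\}| \in \{0,1,2\}$, so the cumulative error in the exponent is $O(l/m^3) = o(1)$ once $m \ge C'\sqrt{l}$. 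The true obstruction is the optimization the expansion leads to. You must lower-bound $\sum_{a,b}\exp(-(\deg(a)+\deg(b))/m - \mathrm{co}(a,b)/m^2 - \cdots)$ uniformly over all degree/co-occurrence profiles realizable by $m$-assignments, and Jensen applied to the degree marginals cannot do this: the adversary controls the \emph{joint} distribution of $(\deg(a),\deg(b),\mathrm{co}(a,b))$ and will correlate large co-occurrence with exactly the pairs whose degree terms are small, a correlation invisible to the budgets $\sum_c \deg(c) = lm$ and $\sum_{a,b}\mathrm{co}(a,b) = \sum_i |L(w_i)\cap A|\,|L(w_i)\cap B|$. On top of this, the diagonal-versus-off-diagonal exchange (terms of scale $(m-1)^l$ traded for terms of scale $(m-2)^l$) has its optimum at an interior value of $|A \cap B|$, where no monotonicity or rearrangement principle is available --- note that even the paper only analyzes the one structured family of balanced assignments with $|A\cap B| = m-2$, and only in the regime $m \lesssim \sqrt{l}$ where that family is \emph{bad}, never proving anything about all assignments when $m$ is large. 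The best known upper bound remains the Wang--Qian--Yan edge bound cited in the paper, which for $K_{2,l}$ gives only $\tau(K_{2,l}) = O(l)$; the missing ``single convexity inequality'' you point to is precisely the content of the conjecture, and neither you nor the paper supplies it.
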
 

In light of the bound of Wang, Qian, and Yan, Thomassen's Question~\ref{ques: universal}, and Theorem~\ref{thm: K2l}, it is natural to study the asymptotic behavior of the list color function threshold as the size of the graphs we consider tends toward infinity. 
We define the extremal functions $\delta_{max}(t) = \max\{\tau(G) - \chi_{\ell}(G) : G \text{ is a graph with at most $t$ edges}\}$ and $\tau_{max}(t) = \max\{\tau(G) : G \text{ is a graph with at most $t$ edges}\}$. By Theorem~\ref{thm: K2l} and the bound of Wang, Qian, and Yan, we know that there exist positive constants $C_1,C_2$ such that $C_1\sqrt{t} \le \delta_{max}(t) \le C_2t$ for large enough $t$. The same asymptotic bounds hold for $\tau_{max}(t)$ as well.

\begin{ques}\label{ques: delta-max}
	What is the asymptotic behavior of $\delta_{max}(t)$? 
\end{ques}

Since $\chi_{\ell}(G) = O(\sqrt{|E(G)|})$ as $|E(G)| \rightarrow \infty$, if $\tau_{max}(t) = \omega(\sqrt{t})$ as $t \rightarrow \infty$, then $\delta_{max}(t) \sim \tau_{max}(t)$ as $t \rightarrow \infty$. 

\begin{ques}\label{ques: tau-max}
What is the asymptotic behavior of $\tau_{max}(t)$? In particular, is $\tau_{max}(t) = \omega(\sqrt{t})$?
\end{ques}

Understanding $\tau(K_{n,l})$ would be the first natural candidate towards answering Questions~\ref{ques: delta-max} and~\ref{ques: tau-max}.

\section{Proof of Theorem~\ref{thm: K2l}} \label{main}

To prove a lower bound on $\tau(G)$, we need an upper bound on $P_\ell(G,m)$ that is smaller than $P(G,m)$ for some $m$. 
Our first step is to give an enumerative generalization~\footnote{While we really only need the generalization when $n=2$, we prove the result for general $n$ for completeness.} of the ``if" direction of Theorem~\ref{thm: listbipartite}. We generalize the folklore `bad' list assignment from Theorem~\ref{thm: listbipartite} and count the number of such list colorings to get an upper bound on $P_\ell(K_{n, n^nt},m)$.

\begin{lem} \label{lem: K-n-n^nt-construction-list-formula}
Let $n, m, t \in \mathbb{N}$ with $n \geq 2$ and $m \geq n + 1$, and let $G = K_{n, n^nt}$ with bipartition $X = \{x_1, \ldots, x_n\}$, $Y = \{y_1, \ldots, y_{n^nt}\}$. Let $S_k = \{m + n(k - 2) + \ell : \ell \in [n]\}$ for each $k \in [n]$, and let $A = \{\{s_1, \ldots, s_n\} : s_k \in S_k \text{ for each } k \in [n]\}$. Suppose $A = \{A_0, \ldots, A_{n^n - 1}\}$. Let $L$ be the $m$-assignment for $G$ defined by $L(x_k) = [m - n] \cup S_k$ for each $k \in [n]$ and $L(y_k) = [m - n] \cup A_{\lfloor (k - 1) / t \rfloor}$ for each $k \in [n^n t]$. Then \footnotemark
\begin{align*}
    P(G, L)
    &= n^n \prod_{i = 0}^n (m - i)^{t\binom{n}{i}(n - 1)^{n - i}} \\
    & + \sum_{N = 1}^n \sum_{S = 0}^{n - N} \left[ n^S \binom{n}{S} \binom{m - n}{N} \left( \sum_{i = 0}^{N - 1} (-1)^i \binom{N}{i} (N - i)^{n - S} \right) \right. \\
    & \cdot \left. \prod_{i = 0}^{S} (m - N - i)^{t \binom{S}{i} (n - 1)^{S - i} n^{n - S}} \right].
\end{align*}
\footnotetext{If $a < b$, then we interpret $\binom{a}{b}$ as being equal to zero.}
\end{lem}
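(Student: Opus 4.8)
The plan is to count proper $L$-colorings directly: first color the vertices of $X$, then count valid extensions to $Y$. Since every edge of $G$ joins $X$ to $Y$ and $X$ is independent, a coloring is a proper $L$-coloring precisely when each $y_j$ receives a color of $L(y_j)$ avoiding the set $C := \{c(x_1), \ldots, c(x_n)\}$ of colors used on $X$ (colors on $X$ may repeat). Hence, for a fixed coloring of $X$ with color set $C$, the number of extensions to $Y$ is $\prod_{j=1}^{n^n t} |L(y_j) \setminus C|$. Because the $t$ vertices $y_j$ with $\lfloor (j-1)/t \rfloor = a$ share the list $[m-n] \cup A_a$, this product collapses to $\prod_{a=0}^{n^n - 1} |([m-n]\cup A_a)\setminus C|^t$, so that $P(G,L) = \sum_C \prod_{a=0}^{n^n-1} |([m-n]\cup A_a)\setminus C|^t$, the sum ranging over all colorings of $X$. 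The structural facts I would lean on throughout are that $[m-n], S_1, \ldots, S_n$ are pairwise disjoint, each $S_k$ has size $n$, and each $A_a$ is a transversal meeting each $S_k$ in exactly one element.

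Next I would classify each coloring of $X$ by two parameters: the number $S$ of vertices $x_k$ assigned a ``high'' color from $S_k$ (the other $n-S$ vertices taking ``low'' colors from $[m-n]$), and the number $N$ of distinct low colors used. Choosing which $S$ vertices are high and coloring them contributes the factor $n^S\binom{n}{S}$, since $|S_k| = n$. Coloring the remaining $n-S$ (labeled) low vertices with exactly $N$ distinct colors contributes $\binom{m-n}{N}$ times the number of surjections of an $(n-S)$-set onto an $N$-set, which inclusion–exclusion evaluates as $\sum_{i=0}^{N-1}(-1)^i\binom{N}{i}(N-i)^{n-S}$; here the $i=N$ term vanishes because $n-S \ge N \ge 1$, which justifies the upper limit $N-1$ appearing in the statement.

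It then remains to evaluate $\prod_a |([m-n]\cup A_a)\setminus C|^t$, and the crux is to show it depends only on $S$ and $N$. Writing $C$ as the disjoint union of its $N$ low colors with its $S$ high colors (which occupy $S$ of the sets $S_k$, one element each), disjointness gives $|([m-n]\cup A_a)\setminus C| = (m-n-N) + (n-j) = m-N-j$, where $j$ counts the high colors of $C$ that $A_a$ also contains. Since $A_a$ selects one element of each $S_k$ independently, the number of transversals realizing a given $j$ is $\binom{S}{j}(n-1)^{S-j}n^{n-S}$ (agree on $j$ of the $S$ occupied coordinates, disagree on the rest, and choose freely on the remaining $n-S$), which yields $\prod_a |([m-n]\cup A_a)\setminus C|^t = \prod_{i=0}^{S}(m-N-i)^{t\binom{S}{i}(n-1)^{S-i}n^{n-S}}$. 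Multiplying the three factors and summing over admissible pairs $(S,N)$ then gives the claimed expression, after separating the single case $S=n$ (equivalently $N=0$), in which all $x_k$ are high so $C$ is itself a transversal; the same agreement-count reasoning there produces exactly $n^n\prod_{i=0}^n(m-i)^{t\binom{n}{i}(n-1)^{n-i}}$, the first displayed term, while the remaining pairs, all satisfying $1 \le N \le n-S$, furnish the double sum. The main obstacle is precisely this last computation: I expect the delicate point to be verifying that the product is genuinely independent of which coordinates $C$ occupies and of the specific colors chosen, as this independence is what lets the per-class counts factor cleanly and collapses the sum over all colorings of $X$ into the compact double sum.
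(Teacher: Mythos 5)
Your proposal is correct and follows essentially the same route as the paper: partition the proper $L$-colorings by the pair $(N,S)$ recording how many distinct low colors and how many high colors $X$ receives, count the colorings of $X$ in each class via $n^S\binom{n}{S}\binom{m-n}{N}$ times a surjection count, and show the extension count to $Y$ depends only on $(N,S)$ by counting, for each $j$, the transversals $A_a$ meeting the $S$ occupied high coordinates in exactly $j$ elements, namely $\binom{S}{j}(n-1)^{S-j}n^{n-S}$ of them. This matches the paper's two-step argument (including the separate treatment of the all-high case $(N,S)=(0,n)$) in every essential detail.
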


\begin{proof}
Let $\mathcal{C}$ be the set of all proper $L$-colorings of $G$. Let $T = \{(0, n)\} \cup \{(N, S) \in \mathbb{Z} \times \mathbb{Z} : 1 \leq N \leq n \text{ and } 0 \leq S \leq n - N\}$. For each $(N, S) \in T$, let $\mathcal{T}_{(N, S)}$ be the set of proper $L$-colorings $f$ of $G$ such that $\lvert f(X) \cap [m - n] \rvert = N$ and $\lvert f(X) \cap \bigcup_{k = 1}^n S_k \rvert = S$. Notice that $P(G, L) = \lvert \mathcal{C} \rvert = \sum_{(N, S) \in T} \lvert \mathcal{T}_{(N, S)} \rvert$.

Let $L'$ be the restriction of $L$ to $X$. We compute $\lvert \mathcal{T}_{(0, n)} \rvert$ in two steps: first, we count the number of proper $L'$-colorings $h$ of $G[X]$ such that $h(X) \subseteq \bigcup_{k = 1}^n S_k$. Then, given a proper $L'$-coloring $h$ of $G[X]$ such that $h(X) \subseteq \bigcup_{k = 1}^n S_k$, we count the number of proper $L$-colorings $f$ of $G$ such that $f(v) = h(v)$ for each $v \in X$. We will find that the number obtained in the second step does not depend on $h$, so that $\lvert \mathcal{T}_{(0, n)} \rvert$ equals the number obtained in the first step times the number obtained in the second step.

For step one, notice that $\lvert S_k \rvert = n$ for each $k \in [n]$. So, the number of proper $L'$-colorings $h$ of $G[X]$ such that $h(X) \subseteq \bigcup_{k = 1}^n S_k$ is $\prod_{k = 1}^n \lvert S_k \rvert = n^n$. For step two, suppose $h$ is a proper $L'$-coloring of $G[X]$ such that $h(X) \subseteq \bigcup_{k = 1}^n S_k$. For each $k \in [n]$, suppose $h(x_k) = s_k$. For each integer $i$ with $0 \leq i \leq n$, let $R_i = \{j \in [n^nt] : \lvert L(y_j) \cap \{s_k : k \in [n]\} \rvert = i\}$. Notice that $R_0, \ldots, R_n$ form a partition of $[n^nt]$, and that $\lvert R_i \rvert = t \cdot \lvert \{j \in [n^n - 1] \cup \{0\} : \lvert A_j \cap \{s_k : k \in [n]\} \rvert = i\} \rvert$ for each integer $i$ with $0 \leq i \leq n$. By the definition of $A$, it is easy to see $\lvert R_i \rvert = t \binom{n}{i} (n - 1)^{n - i}.$  Then, the number of proper $L$-colorings $f$ of $G$ such that $f(v) = h(v)$ for each $v \in X$ is given by
\begin{align*}
    \prod_{i = 0}^n (m - i)^{\lvert R_i \rvert} = \prod_{i = 0}^n (m - i)^{t \binom{n}{i} (n - 1)^{n - i}}
\end{align*}
from which we conclude
\begin{align*}
    \lvert \mathcal{T}_{(0, n)} \rvert = n^n \prod_{i = 0}^n (m - i)^{t \binom{n}{i} (n - 1)^{n - i}}.
\end{align*}

We now compute $\lvert \mathcal{T}_{(N, S)} \rvert$ for arbitrary $(N, S) \in T \setminus \{(0, n)\}$. To do so, we again employ a two-step process: first, we count the number of proper $L'$-colorings $h$ of $G[X]$ such that $\lvert h(X) \cap [m - n] \rvert = N$ and $\lvert h(X) \cap \bigcup_{k = 1}^n S_k \rvert = S$. Then, given a proper $L'$-coloring $h$ of $G[X]$ such that $\lvert h(X) \cap [m - n] \rvert = N$ and $\lvert h(X) \cap \bigcup_{k = 1}^n S_k \rvert = S$, we count the number of proper $L$-colorings $f$ of $G$ such that $f(v) = h(v)$ for each $v \in X$. Again, we will find that the number obtained in the second step does not depend on $h$, so that $\lvert \mathcal{T}_{(N, S)} \rvert$ equals the number obtained in the first step times the number obtained in the second step.

For step one, we can generate all such proper $L'$-colorings $h$ of $G[X]$ via the following four-part process: first, choose a subset $P_s$ of $[n]$ of size $S$, and let $P_o = [n] - P_s$. Secondly, choose a subset $O$ of $[m - n]$ of size $N$. Thirdly, color the vertices in $\{x_k : k \in P_o\}$ with the colors in $O$ such that each color in $O$ is used at least once. Lastly, for each $k \in P_s$, color $x_k$ with a color in $S_k$. The first part can be done in $\binom{n}{S}$ ways. The second part can be done in $\binom{m - n}{N}$ ways. By some simple counting and the Inclusion-Exclusion Principle, the third part can be done in
\begin{align*}
    \sum_{i = 0}^N (-1)^i \binom{N}{i} (N - i)^{n - S} = \sum_{i = 0}^{N - 1} (-1)^i \binom{N}{i} (N - i)^{n - S}
\end{align*}
ways. Finally, for each $k \in P_s$, there are $\lvert S_k \rvert = n$ ways to color $x_k$ with a color in $S_k$. Thus, the final part can be done in $n^S$ ways. Hence, the number of proper $L'$-colorings $h$ of $G[X]$ such that $\lvert h(X) \cap [m - n] \rvert = N$ and $\lvert h(X) \cap \bigcup_{k = 1}^n S_k \rvert = S$ is
\begin{align*}
    \binom{n}{S} \binom{m - n}{N} \left( \sum_{i = 0}^{N - 1} (-1)^i \binom{N}{i} (N - i)^{n - S} \right) n^S.
\end{align*}

For step two, suppose $h$ is a proper $L'$-coloring of $G[X]$ such that $\lvert h(X) \cap [m - n] \rvert = N$ and $\lvert h(X) \cap \bigcup_{k = 1}^n S_k \rvert = S$. Let $P_o = \{k \in [n] : h(x_k) \in [m - n]\}$ and $P_s = \{k \in [n] : h(x_k) \in S_k\}$. Notice that $P_o$ and $P_s$ form a partition of $[n]$. Suppose $h(X) \cap [m - n] = \{o_1, \ldots, o_N\}$. For each $k \in P_s$, suppose $h(x_k) = s_k$. For each $i \in \mathbb{Z}$ with $0 \leq i \leq \lvert P_s \rvert = S$, let $R_i = \{j \in [n^nt] : \lvert L(y_j) \cap \{s_k : k \in P_s\} \rvert = i\}$. Notice that $R_0, \ldots, R_{S}$ form a partition of $[n^nt]$, and that $\lvert R_i \rvert = t \cdot \lvert \{j \in [n^n - 1] \cup \{0\} : \lvert A_j \cap \{s_k : k \in P_s\} \rvert = i\} \rvert$ for each $i \in \mathbb{Z}$ with $0 \leq i \leq S$. By the definition of $A$, it is easy to see $\lvert R_i \rvert = t \binom{S}{i} (n - 1)^{S - i} n^{n - S}.$  Then, the number of proper $L$-colorings $f$ of $G$ such that $f(v) = h(v)$ for each $v \in X$ is given by
\begin{align*}
\prod_{i = 0}^S (m - N - i)^{\lvert R_i \rvert} = \prod_{i = 0}^S (m - N - i)^{t \binom{S}{i} (n - 1)^{S - i} n^{n - S}},
\end{align*}
from which we conclude
\begin{align*}
    \lvert \mathcal{T}_{(N, S)} \rvert = n^S \binom{n}{S} \binom{m - n}{N} \left( \sum_{i = 0}^{N - 1} (-1)^i \binom{N}{i} (N - i)^{n - S} \right) \prod_{i = 0}^S (m - N - i)^{t \binom{S}{i} (n - 1)^{S - i} n^{n - S}}.
\end{align*}
The result follows.
\end{proof}

We can use Lemma~\ref{lem: K-n-n^nt-construction-list-formula} to find appropriate $m,$ $n,$ and $t$ such that $P(G, L) < P(G,m)$, which would imply $\tau(G) > m$.  Since the focus of Theorem~\ref{thm: K2l} is $n=2$, we will now slightly generalize the list assignment constructed in the statement of Lemma~\ref{lem: K-n-n^nt-construction-list-formula} in the case $n=2$. The notion of `balanced' list assignment given below captures the essence of what makes this list assignment `bad' as well as nice to work with.

Suppose $G = K_{2,l}$, the bipartition of $G$ is $\{x_1,x_2\}, \{y_1, \ldots, y_l\}$, and $L$ is an $m$-assignment for $G$ such that $L(x_1) = [m]$ and $L(x_2) = [m-2] \cup \{m+1,m+2\}$. Let $z_1 = |\{j\in[l] : L(y_j) = [m-2] \cup \{m-1,m+1\}\}|$, $z_2 = |\{j\in[l] : L(y_j) = [m-2] \cup \{m-1,m+2\}\}|$, $z_3 = |\{j\in[l] : L(y_j) = [m-2] \cup \{m,m+1\}\}|$, and $z_4 = |\{j\in[l] : L(y_j) = [m-2] \cup \{m,m+2\}\}|$. Then, we say the list assignment $L$ is \emph{balanced} if $\sum_{i=1}^{4} z_i = l$ and $|z_j-z_i| \leq 1$ whenever $i,j\in[4]$.    

We will now use the formula and list assignment for $G = K_{2,4t}$ in Lemma~\ref{lem: K-n-n^nt-construction-list-formula}, to determine how large $t$ must be to ensure the existence of a balanced $m$-assignment $L$ for $G$ that demonstrates $P_\ell(G,m) < P(G,m)$. 


\begin{lem}\label{lem;4t}
	Suppose $G = K_{2,4t}$ and $m\geq3$. If $$t > \max\left\{\frac{\ln(\epsilon/(4(m-2)))}{2\ln((m-2)/(m-1))},\frac{\ln((2-\epsilon)/4)}{\ln(1-1/(m-1)^2)}\right\}$$ for some real number $\epsilon$ with $0<\epsilon<2$, then there is a balanced $m$-assignment $L$ for $G$ such that $P(G,L)<P(G,m)$.
\end{lem}
\begin{proof}
	Suppose $G = K_{2, 4t}$ and the bipartition of $G$ is $\{x_1, x_2\}$, $\{y_1, \ldots, y_{4t}\}$.  Clearly, $P(G, m) = m(m - 1)^{4t} + m(m - 1)(m - 2)^{4t}$.  If $L$ is the $m$-assignment for $G$ described in the statement of Lemma~\ref{lem: K-n-n^nt-construction-list-formula}, then $L$ is a balanced $m$-assignment and
	\begin{align*}
		P(G, L)
		&= (m - 2)(m - 1)^{4t} + (m - 3)(m - 2)^{4t + 1} \\
		& + 4(m - 2)^{2t + 1}(m - 1)^{2t} + 4(m - 2)^t(m - 1)^{2t}m^t
	\end{align*} by Lemma~\ref{lem: K-n-n^nt-construction-list-formula}. We will show $P(G,L) < P(G,m)$.  Let $b=1+1/(m-1)$ and $s=1-1/(m-1).$  Notice 
	$$t > \frac{\ln(\epsilon/(4(m-2)))}{2\ln((m-2)/(m-1))}\;\;\;\;\text{which implies}\;\;\;\;4(m-2)s^{2t}<\epsilon,\; \text{as }\;\; 2 \ln((m -
	2)/(m - 1)) < 0.$$
	Also
	$$t > \frac{\ln((2-\epsilon)/4)}{\ln(1-1/(m-1)^2)}\;\;\;\;\text{which implies}\;\;\;\;4s^tb^t<2-\epsilon.$$
	Combining these inequalities yields
	$$4(m-2)s^{2t}+4s^tb^t<2\;\;\;\;\text{which implies}\;\;\;\;4(m-2)s^{2t}+4s^tb^t<2+(4m-6)s^{4t}.$$
	Then,
	\begin{align*}
		&4(m-1)^{2t}(m-2)^{2t+1}+4m^t(m-2)^t(m-1)^{2t} \\
		&<2(m-1)^{4t}+m(m-1)(m-2)^{4t}-(m-2)^{4t+1}(m-3).
	\end{align*}
	From which we obtain 
	\begin{align*}
		&(m-2)(m-1)^{4t}+(m-2)^{4t+1}(m-3)+4(m-1)^{2t}(m-2)^{2t+1}+4m^t(m-2)^t(m-1)^{2t}\\
		&<m(m-1)^{4t}+m(m-1)(m-2)^{4t}
	\end{align*}
	as desired.
\end{proof}

We now establish some notation that will be used for the remainder of the paper.  Suppose $G=K_{2,l}$, the bipartition of $G$ is $\{x_1,x_2\}, \{y_1, \ldots, y_l\}$, and $L$ is an $m$-assignment for $G$. For each $(a_1,a_2)\in L(x_1) \times L(x_2)$, let $\mathcal{C}_{(a_1,a_2)}$ be the set of proper
$L$-colorings of $G$ in which $x_i$ is colored with $a_i$ for each $i \in [2]$. Notice $P(G, L) = \sum_{(a_1,a_2)\in L(x_1) \times L(x_2) }|\CC_{(a_1, a_2)}|.$

Generally speaking, our strategy for proving Theorem~\ref{thm: K2l} is inductive.  We wish to show that if there is a balanced $m$-assignment $L$ for $G = K_{2,4t}$ that demonstrates $P_\ell(G,m) < P(G,m)$ (Lemma~\ref{lem: K-n-n^nt-construction-list-formula} will be the key to proving such an $L$ exists) and $t$ is sufficiently large, then for any $l \geq 4t$ there is a balanced $m$-assignment $L'$ for $G' = K_{2,l}$ that demonstrates $P_\ell(G',m) < P(G',m)$.  The next two lemmas make the inductive idea precise.

\begin{lem}\label{lem; balanced}
Suppose $G = K_{2,l}$ and $G' = K_{2,l+1}$.  If $L$ is a balanced $m$-assignment for $G$ with $m \geq 3$, $P(G,L) < P(G,m)$, $\epsilon \in (0,2)$ , and $l$ satisfies 
$$\left\lfloor \frac{l}{4} \right \rfloor > \max\left\{\frac{\ln(\epsilon/(2(m-2)))}{2\ln((m-2)/(m-1))},\frac{\ln((2-\epsilon)/4)}{\ln(1-1/(m-1)^2)}\right\},$$ 
then there is a balanced $m$-assignment $L'$ for $G'$ such that $P(G',L')< P(G',m)$.
\end{lem}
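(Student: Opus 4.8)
The plan is to obtain $L'$ from $L$ by attaching a single new vertex $y_{l+1}$, and then to track the change in both $P(\cdot,m)$ and $P(\cdot,L)$ \emph{exactly}, reducing everything to one clean inequality governed by the two hypotheses.

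Since $L$ is balanced with $\sum_{i=1}^4 z_i=l$ and $q=\lfloor l/4\rfloor$, exactly $l-4q$ of the $z_i$ equal $q+1$ and the rest equal $q$, so some type attains the minimum count $q$. I define $L'$ to agree with $L$ on $x_1,x_2,y_1,\dots,y_l$ and to assign $y_{l+1}$ a list of a minimum-count type; this keeps all four counts within $1$ of each other, so $L'$ is balanced. Relabeling the colors $m-1\leftrightarrow m$ and $m+1\leftrightarrow m+2$ is an automorphism of the list structure that permutes the four $y$-types transitively, so I may assume $L'(y_{l+1})=L_0:=[m-2]\cup\{m-1,m+1\}$ and that its old count $z_1=q$ is minimal. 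Because $y_{l+1}$ is adjacent only to $x_1,x_2$, each proper $L$-coloring of $G$ with $x_i\mapsto a_i$ extends in $|L_0\setminus\{a_1,a_2\}|$ ways, so $|\CC'_{(a_1,a_2)}|=|\CC_{(a_1,a_2)}|\cdot|L_0\setminus\{a_1,a_2\}|$, where $\CC'$ refers to $G'$.

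Next I record the exact identity. From $P(K_{2,l},m)=m(m-1)^l+m(m-1)(m-2)^l$ one gets $P(G',m)=(m-2)P(G,m)+m(m-1)^l$. Writing $\mu(a_1,a_2)=|L_0\setminus\{a_1,a_2\}|\in\{m-2,m-1,m\}$ and $\Delta=\sum_{(a_1,a_2)}|\CC_{(a_1,a_2)}|\big(\mu(a_1,a_2)-(m-2)\big)$, summing the product formula gives $P(G',L')=(m-2)P(G,L)+\Delta$. Subtracting,
$$P(G',m)-P(G',L')=(m-2)\big(P(G,m)-P(G,L)\big)+\big(m(m-1)^l-\Delta\big).$$
Since $m\ge 3$ the first term is positive by hypothesis, so it suffices to show $\Delta\le m(m-1)^l$. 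Now $\mu(a_1,a_2)-(m-2)=2-|\{a_1,a_2\}\cap L_0|$, and among $L(x_1)$ only $m\notin L_0$ while among $L(x_2)$ only $m+2\notin L_0$; grouping the sum by which of $a_1,a_2$ lies in $L_0$ collapses $\Delta$ to $X_1(m)+X_2(m+2)+\sum_{c\in[m-2]}|\CC_{(c,c)}|$, where $X_1(m)$ and $X_2(m+2)$ count proper $L$-colorings of $G$ with $x_1\mapsto m$ and with $x_2\mapsto m+2$. As every $y_j$ contains $[m-2]$, the shared-color term equals $(m-2)(m-1)^l$, and since $m(m-1)^l-(m-2)(m-1)^l=2(m-1)^l$ the target becomes $X_1(m)+X_2(m+2)\le 2(m-1)^l$.

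Finally I estimate this sum. Splitting the $y_j$ by type shows that $X_1(m)$ and $X_2(m+2)$ each equal a sum of three monomials in $m,m-1,m-2$ with exponents assembled from $z_1,\dots,z_4$. Dividing by $(m-1)^l$ and setting $u=m/(m-1)$, $v=(m-2)/(m-1)$ (so $uv=1-1/(m-1)^2<1$ and, crucially, $u+v=2$), the six resulting terms fall into three groups: one bounded by $2(m-2)v^{2q}$ using $z_i\ge q$; a mixed group $u^{z_2}v^{z_3}+u^{z_3}v^{z_2}$, which the identity $u+v=2$ forces to be at most $2(uv)^q$ in every case $z_2,z_3\in\{q,q+1\}$; and a diagonal group $2u^{z_1}v^{z_4}\le 2(uv)^q$, where I use the minimality $z_1=q$ together with $z_4\ge q$. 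Rewriting the two displayed hypotheses on $q$ (multiplying through by the negative logarithms) as $2(m-2)v^{2q}<\epsilon$ and $4(uv)^q<2-\epsilon$, these three bounds sum to strictly less than $\epsilon+(2-\epsilon)=2$, giving $X_1(m)+X_2(m+2)<2(m-1)^l$, hence $\Delta<m(m-1)^l$ and the result. I expect the main labor to be the bookkeeping in evaluating $X_1(m),X_2(m+2)$ and, above all, controlling the imbalance $z_i\in\{q,q+1\}$ so that the cancellations survive: the identity $u+v=2$ and the minimality $z_1=q$ are precisely the two facts that render the balanced (rather than perfectly equal) counts harmless.
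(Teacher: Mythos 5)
Your proposal is correct and follows essentially the same route as the paper's proof: the same extension of $L$ by a minimum-count list type, the same quantity $\Delta = P(G',L')-(m-2)P(G,L)$ (the paper's $J$), the same reduction to showing $J < m(m-1)^l$, and the same three-group estimate exploiting $u+v=2$ and the minimality of $z_1$. The only differences are organizational (your indicator-function collapse of $\Delta$ and the identity $P(G',m)=(m-2)P(G,m)+m(m-1)^l$ versus the paper's term-by-term bookkeeping of the coefficients of the $c_{(i,j)}$).
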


\begin{proof}
For simplicity, suppose the bipartitions of $G$ and $G'$ are $\{x_1,x_2\}, \{y_1, \ldots, y_l\}$ and $\{x_1,x_2\}, \{y_1, \ldots, y_{l+1}\}$ respectively. We know that $P(G,L) < P(G,m) = m(m-1)^l+m(m-1)(m-2)^l$. We also know that $P(G',m) = m(m-1)^{l+1}+m(m-1)(m-2)^{l+1}.$ As such $$P(G',m)-P   (G,m)=m(m-1)^l(m-2)+m(m-1)(m-2)^l(m-3).$$ 
Let $c_{(i,j)} = |\mathcal{C}_{(i,j)}|$, with regard to $G$. We know $P(G,L) = \sum_{(i,j)\in L(x_1)\times L(x_2)} c_{(i,j)}.$ Without loss of generality assume $z_1 \leq z_j$ for each $j\in \{2,3,4\}$. Let $L'$ be the $m$-assignment for $G'$ given by $L'(v) = L(v)$ if $v\in V(G)$ and $L'(y_{l+1}) = [m-2]\cup \{m-1, m+1\}$. Clearly $L'$ is a balanced $m$-assignment for $G'$.

With some simple counting, we see that:
\begin{align*} 
P(G',L') &= (m-1)\sum_{i=1}^{m-2}c_{(i,i)} + (m-2)\sum_{\substack{(i,j)\in [m-2]^2\\
i\neq j}}c_{(i,j)}+ (m-2)\sum_{i=1}^{m-2}c_{(m-1,i)}\\
 &+ (m-1)\sum_{i=1}^{m-2}c_{(m,i)} + (m-2)\sum_{i=1}^{m-2}c_{(i,m+1)} + (m-1)\sum_{i=1}^{m-2}c_{(i,m+2)}\\
&+ (m-2)c_{(m-1,m+1)} + (m-1)[c_{(m-1,m+2)} + c_{(m,m+1)}] + mc_{(m,m+2)}\\ 
&= (m-2)P(G,L)+ 2c_{(m,m+2)}+c_{(m-1,m+2)}+c_{(m,m+1)}+\sum_{i=1}^{m-2}c_{(i,m+2)}\\
&+\sum_{i=1}^{m-2}c_{(m,i)}+\sum_{i=1}^{m-2}c_{(i,i)}.
\end{align*}
This implies $P(G',L') - P(G,L) = (m-3)P(G,L) + J \leq (m-3)[m(m-1)^l+m(m-1)(m-2)^l] + J$, where $J = P(G',L')-(m-2)P(G,L).$  We will show if 
$$\left\lfloor \frac{l}{4} \right \rfloor > \max\left\{\frac{\ln(\epsilon/(2(m-2)))}{2\ln((m-2)/(m-1))},\frac{\ln((2-\epsilon)/4)}{\ln(1-1/(m-1)^2)}\right\},$$
then $J<m(m-1)^l$. Notice, if $J<m(m-1)^l$ we have $P(G',L') - P(G,L) < P(G',m)-P(G,m)$ which implies $P(G',L')<P(G',m).$  Since $z_1 \leq z_j$ where $j \in [4]$, $z_1 = \lfloor l/4 \rfloor$. So,
$$z_1 > \frac{\ln(\epsilon/(2(m-2)))}{2\ln((m-2)/(m-1))}\;\;\;\;\text{which implies}\;\;\;\;
2(m-2)\left(\frac{m-2}{m-1}\right)^{2z_1} < \epsilon.$$
Since $2z_1 \leq z_2 + z_4$, $2z_1 \leq z_3 + z_4$, and $(m-2)/(m-1) < 1,$ we have
\begin{equation} \label{eqn:two}
    (m-2)\left(\frac{m-2}{m-1}\right)^{z_2+z_4}
+(m-2)\left(\frac{m-2}{m-1}\right)^{z_3+z_4} < \epsilon.
\end{equation}
Similarly,
$$
z_{1} > \frac{\ln((2-\epsilon)/4)}{\ln(1-1/(m-1)^2)}\;\;\;\;\text{which implies}\;\;\;\; 4\left(\frac{m(m-2)}{(m-1)^2}\right)^{z_1} < 2-\epsilon.
$$
Now, let $b=1+1/(m-1)$ and $s=1-1/(m-1)$. The most recent inequality becomes $4(bs)^{z_1}<2-\epsilon$, and since $s<1$, 
$2(bs)^{z_1} \geq 2b^{z_1}s^{z_4}$. We will show that $b^{z_3}s^{z_2}+b^{z_2}s^{z_3} \leq 2(bs)^{z_1}$. Notice that $z_2=z_3$ or $\max\{z_2,z_3\} = z_1+1$ and $\min\{z_2,z_3\} = z_1$.

Assume $z_2=z_3$. Since $bs < 1$, $b^{z_3}s^{z_2}+b^{z_2}s^{z_3} =2(bs)^{z_2} \leq 2(bs)^{z_1}$.
Now, without loss of generality, assume $z_2=z_1$ and $z_3=z_1+1$. Then, $b^{z_3}s^{z_2}+b^{z_2}s^{z_3} = (bs)^{z_1}(b+s) = 2(bs)^{z_1}.$ So, we have $b^{z_3}s^{z_2}+b^{z_2}s^{z_3} \leq 2(bs)^{z_1}$.
As a result, $2b^{z_1}s^{z_4} + b^{z_3}s^{z_2}+b^{z_2}s^{z_3} < 2-\epsilon$. This along with (\ref{eqn:two})
implies
$$2b^{z_1}s^{z_4} + b^{z_3}s^{z_2}+b^{z_2}s^{z_3} + (m-2)s^{z_2+z_4} + (m-2)s^{z_3+z_4} < 2.$$
This implies
\begin{align*}
&2m^{z_1}(m-1)^{z_2+z_3}(m-2)^{z_4}+m^{z_3}(m-1)^{z_1+z_4}(m-2)^{z_2}+m^{z_2}(m-1)^{z_1+z_4}(m-2)^{z_3}\\
&+ (m-2)(m-1)^{z_1+z_3}(m-2)^{z_2+z_4}+(m-2)(m-1)^{z_1+z_2}(m-2)^{z_3+z_4}<2(m-1)^l
\end{align*}
which implies
\begin{align*}
&2m^{z_1}(m-1)^{z_2+z_3}(m-2)^{z_4}+m^{z_3}(m-1)^{z_1+z_4}(m-2)^{z_2}+m^{z_2}(m-1)^{z_1+z_4}(m-2)^{z_3}\\
&+ (m-2)(m-1)^{z_1+z_3}(m-2)^{z_2+z_4}+(m-2)(m-1)^{z_1+z_2}(m-2)^{z_3+z_4}+(m-2)(m-1)^{l}\\
& < m(m-1)^l.
\end{align*}
Recall $J=2c_{(m,m+2)}+c_{(m-1,m+2)}+c_{(m,m+1)}+\sum_{i=1}^{m-2}c_{(i,m+2)} + \sum_{i=1}^{m-2}c_{(m,i)}+\sum_{i=1}^{m-2}c_{(i,i)}.$
Thus, $J<m(m-1)^l$.
\end{proof} 

\begin{lem}\label{lem;threshsold}
Suppose $G = K_{2,r}$ and there is a balanced $m$-assignment $L$ for $G$ with $m\geq3$ such that $P(G,L) < P(G,m)$. Suppose there is an $ \epsilon \in (0,2)$ such that $r$ satisfies 
$$\left\lfloor \frac{r}{4} \right \rfloor > \max\left\{\frac{\ln(\epsilon/(2(m-2)))}{2\ln((m-2)/(m-1))},\frac{\ln((2-\epsilon)/4)}{\ln(1-1/(m-1)^2)}\right\}.$$ 
Then, for each $l \geq r$, if $G' = K_{2,l}$, there is a balanced $m$-assignment $L'$ for $G'$ such that $P(G',L')< P(G',m)$. Consequently, $P_{\ell}(K_{2,l},m) < P(K_{2,l},m)$ (i.e., $\tau (K_{2,l}) > m$) whenever $l \geq r$.
\end{lem}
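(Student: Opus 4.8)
The plan is to proceed by induction on $l$, using Lemma~\ref{lem; balanced} as the engine for the inductive step. The base case $l = r$ is exactly the hypothesis of the lemma: we are handed a balanced $m$-assignment $L$ for $K_{2,r}$ with $P(K_{2,r}, L) < P(K_{2,r}, m)$. For the inductive step, I would assume that for some $l \geq r$ there is a balanced $m$-assignment $L$ for $K_{2,l}$ with $P(K_{2,l}, L) < P(K_{2,l}, m)$, and then invoke Lemma~\ref{lem; balanced} (with the very same $\epsilon$) to obtain a balanced $m$-assignment $L'$ for $K_{2,l+1}$ with $P(K_{2,l+1}, L') < P(K_{2,l+1}, m)$. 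Because Lemma~\ref{lem; balanced} outputs a \emph{balanced} assignment satisfying the same kind of strict inequality, the inductive hypothesis is reproduced verbatim, so the induction closes.

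The one point that needs care is checking that the floor hypothesis demanded by Lemma~\ref{lem; balanced} holds at every stage of the induction, not merely at the base. Applying Lemma~\ref{lem; balanced} to $K_{2,l}$ requires $\lfloor l/4 \rfloor$ to exceed exactly the maximum that appears in the statement of Lemma~\ref{lem;threshsold}. Since that maximum depends only on $m$ and $\epsilon$, both of which are fixed throughout, and since $\lfloor l/4 \rfloor$ is nondecreasing in $l$, the assumed inequality $\lfloor r/4 \rfloor > \max\{\cdots\}$ immediately yields $\lfloor l/4 \rfloor \geq \lfloor r/4 \rfloor > \max\{\cdots\}$ for every $l \geq r$. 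Thus the hypotheses of Lemma~\ref{lem; balanced} are met at each step, and the induction runs for all $l \geq r$ with a single choice of $\epsilon$.

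For the final (``consequently'') assertion, fix $l \geq r$ and let $L'$ be the balanced $m$-assignment for $K_{2,l}$ produced above. Since $P_\ell(K_{2,l}, m)$ is by definition the minimum of $P(K_{2,l}, L)$ taken over all $m$-assignments $L$, we obtain $P_\ell(K_{2,l}, m) \leq P(K_{2,l}, L') < P(K_{2,l}, m)$. Hence $P_\ell$ and $P$ disagree at this value of $m$, so no threshold $k \leq m$ can satisfy the defining condition of $\tau(K_{2,l})$ (that $P_\ell = P$ for all arguments at least $k$), and therefore $\tau(K_{2,l}) > m$.

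Since all of the substantive inequality-chasing is already encapsulated in Lemma~\ref{lem; balanced}, I do not anticipate a genuine obstacle in this argument; the only thing to remain vigilant about is the monotonicity observation in the second paragraph, which is precisely what allows a single floor condition imposed on $r$ to propagate to every larger $l$ without changing $\epsilon$.
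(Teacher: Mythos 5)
Your proposal is correct and follows essentially the same route as the paper: induction on $l$ with base case $l=r$, using Lemma~\ref{lem; balanced} for the inductive step and the monotonicity of $\lfloor l/4\rfloor$ to verify its floor hypothesis at every stage, then deducing the ``consequently'' claim from the definition of $P_\ell$ as a minimum over $m$-assignments.
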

\begin{proof}
The proof is by induction on $l$. When $l=r$ the desired statement is true since $G' = K_{2,r}$. Suppose $l>r$ and the desired statement holds for all natural numbers greater than  $r-1$ and less than $l$. Since $l-1\geq r$, there is a balanced $m$-assignment, $L$, for $H=K_{2,l-1}$ such that $P(H,L)<P(H,m)$. Since there is an $\epsilon \in (0,2)$ such that 
$$\left\lfloor\frac{l-1}{4}\right\rfloor\geq \left\lfloor \frac{r}{4} \right \rfloor > \max\left\{\frac{\ln(\epsilon/(2(m-2)))}{2\ln((m-2)/(m-1))},\frac{\ln((2-\epsilon)/4)}{\ln(1-1/(m-1)^2)}\right\},$$ Lemma~\ref{lem; balanced} implies there is a balanced $m$-assignment $L'$ for $G'= K_{2,l}$ such that $P(G',L')<P(G',m).$
\end{proof}

The next lemma follows immediately from Lemmas~~\ref{lem;threshsold} and~\ref{lem;4t}.

\begin{lem} \label{lem; paul}
If $t \in \mathbb{N}$ , $m\geq 3$, and $$t > \max\left\{\frac{\ln(\epsilon/(4(m-2)))}{2\ln((m-2)/(m-1))},\frac{\ln((2-\epsilon)/4)}{\ln(1-1/(m-1)^2)}\right\}$$ for some $\epsilon \in (0,2)$, then $\tau(K_{2,l}) > m$ whenever $l \geq 4t$.
\end{lem}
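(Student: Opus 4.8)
The plan is to obtain Lemma~\ref{lem; paul} by concatenating Lemma~\ref{lem;4t} (which manufactures a ``bad'' balanced assignment on the base graph $K_{2,4t}$) with the inductive propagation supplied by Lemma~\ref{lem;threshsold} (which pushes such an assignment up to every larger $K_{2,l}$). Concretely, I would set $r = 4t$ and verify that the single inequality assumed in Lemma~\ref{lem; paul} simultaneously feeds both of these lemmas for the same $\epsilon \in (0,2)$.

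First I would apply Lemma~\ref{lem;4t} to $G = K_{2,4t}$. Its hypothesis is exactly the inequality assumed in Lemma~\ref{lem; paul}, so it yields a balanced $m$-assignment $L$ for $K_{2,4t}$ with $P(G,L) < P(G,m)$. With this assignment in hand, I would invoke Lemma~\ref{lem;threshsold} with $r = 4t$; since $t \in \N$ we have $\lfloor r/4 \rfloor = t$. The conclusion of Lemma~\ref{lem;threshsold} is precisely that $\tau(K_{2,l}) > m$ for every $l \geq r = 4t$, which is the desired statement.

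The one point requiring care --- and the only real content beyond bookkeeping --- is checking that the hypothesis of Lemma~\ref{lem;4t} implies the hypothesis of Lemma~\ref{lem;threshsold} with $\lfloor r/4\rfloor = t$. The second term of the maximum is identical in the two lemmas, so everything reduces to comparing the first terms, which differ only in that Lemma~\ref{lem;4t} carries $\ln(\epsilon/(4(m-2)))$ while Lemma~\ref{lem;threshsold} carries $\ln(\epsilon/(2(m-2)))$. Here I would note that for $m \geq 3$ the common denominator $2\ln((m-2)/(m-1))$ is negative, because $(m-2)/(m-1) < 1$. Since $\epsilon/(4(m-2)) < \epsilon/(2(m-2))$ forces $\ln(\epsilon/(4(m-2))) < \ln(\epsilon/(2(m-2)))$, dividing through by the negative denominator reverses the inequality, so the first term appearing in Lemma~\ref{lem;4t} is at least the first term appearing in Lemma~\ref{lem;threshsold}. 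Hence the maximum in Lemma~\ref{lem;4t} dominates the maximum in Lemma~\ref{lem;threshsold}, and any $t$ exceeding the former exceeds the latter with $\lfloor r/4 \rfloor = t$. I do not anticipate a genuine obstacle: once this sign observation is recorded, the two lemmas chain together verbatim, which is exactly why the result follows immediately.
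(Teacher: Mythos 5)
Your proposal is correct and matches the paper's approach exactly: the paper states that this lemma ``follows immediately from Lemmas~\ref{lem;threshsold} and~\ref{lem;4t}'' and gives no further detail, while you supply the (correct) sign check showing that the hypothesis with $\ln(\epsilon/(4(m-2)))$ in Lemma~\ref{lem;4t} dominates the one with $\ln(\epsilon/(2(m-2)))$ in Lemma~\ref{lem;threshsold} because the shared denominator $2\ln((m-2)/(m-1))$ is negative. Nothing further is needed.
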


Now, we are ready to prove Theorem~\ref{thm: K2l}.

\begin{proof}
With the intent of using Lemma~\ref{lem; paul}, we will show $$q>\max\left\{\frac{\ln(\epsilon/(4(m-2)))}{2\ln((m-2)/(m-1))},\frac{\ln((2-\epsilon)/4)}{\ln(1-1/(m-1)^2)}\right\}$$
when $\epsilon = 1/4$, and $m=\left\lfloor\left(q/\ln(16/7)\right)^{1/2} + 1\right\rfloor.$ Notice $l\geq 16$ implies $q \geq 4$ and $m\geq3$. Clearly, $q \geq (m-1)^2\ln(16/7).$

Let $f: (2,\infty) \rightarrow \R$ and $g: (2, \infty) \rightarrow \R$ be given by $f(x) = (x-1)\ln(16/7)$ and $g(x) = (1/2)\ln(16(x-2)).$  Suppose $h: (2, \infty) \rightarrow \R$ is given by $h(x)= f(x) - g(x)$, we have $h'(x)=\ln(16/7) - 1/(2(x-2)).$ Notice $h'(x) > 0$ when $x \geq 3$, and $h(3)=2\ln(16/7)-\ln(16)/2 > 0$. So, $f(x) > g(x)$ when $x \geq 3$.

Now, since $q \geq(m-1)f(m)$, $q \geq (m-1)^2\ln(16/7)>((m-1)/2)\ln(16(m-2))$.
Let $\epsilon = 1/4$. Using the fact that $\ln(1+x) < x$ when $x \neq 0$ and $x>-1$, we have
$$(m-1)^2\ln(16/7) = \frac{\ln(7/16)}{-1/(m-1)^2} > \frac{\ln((2-\epsilon)/4)}{\ln(1-1/(m-1)^2)}$$
and
$$(m-1)\frac{\ln(16(m-2))}{2} = \frac{\ln(1/(16(m-2)))}{-2/(m-1)}>  \frac{\ln(\epsilon/(4(m-2)))}{2\ln((m-2)/(m-1))}.$$
Since $q=\lfloor l/4\rfloor$, $l\geq 4q$. So Lemma~\ref{lem; paul} implies
$$\tau(K_{2,l})>\left\lfloor\left(\frac{q}{\ln(16/7)}\right)^{1/2} + 1\right\rfloor.$$
\end{proof}

{\bf Acknowledgment.}  This paper is based on a research project conducted with undergraduate students Akash Kumar, Patrick Rewers, Paul Shin, and Khue To at the College of Lake County during the summer and fall of 2021. The support of the College of Lake County is gratefully acknowledged. The authors also thank Dan Cranston and Seth Thomason for helpful conversations.


\begin{thebibliography}{99}
{\small

\bibitem{BH21} J. Becker, J. Hewitt, H. Kaul, M. Maxfield, J. Mudrock, D. Spivey, S. Thomason, T. Wagstrom, The DP color function of joins and vertex-gluings of graphs, \emph{Discrete Mathematics} 345 (2022), article 113093.

\bibitem{B94} N. Biggs, (1994) \emph{Algebraic graph theory}.  New York, NY: Cambridge University Press.

\bibitem{B12} G. D. Birkhoff, A determinant formula for the number of ways of coloring a map, \emph{The Annals of Mathematics} 14 (1912), 42-46.

\bibitem{BK21} M. Bui, H. Kaul, M. Maxfield, J. Mudrock, P. Shin, S. Thomason, Non-chromatic-adherence of the DP color function via generalized theta graphs, \emph{Graphs and Combinatorics} 39 (2023), article 42.

\bibitem{DKT05} F. Dong, K. M. Koh, K. L. Teo, \emph{Chromatic polynomials and chromaticity of graphs}, World Scientific, 2005.

\bibitem{D92} Q. Donner, On the number of list-colorings, \emph{J. Graph Theory} 16 (1992), 239-245.

\bibitem{DP15} Z. Dvo\v{r}\'{a}k and L. Postle, Correspondence coloring and its application to list-coloring planar graphs wihtout cycles of lengths 4 to 8, \emph{J. Combin. Th. Series B} 129 (2018), 38-54.

\bibitem{ET79} P. Erd\H{o}s, A. L. Rubin, and H. Taylor, Choosability in graphs, \emph{Congressus Numerantium} 26 (1979), 125-127.

\bibitem{KM18} H. Kaul and J. Mudrock, Criticality, the list color function, and list coloring the Cartesian product of graphs, \emph{Journal of Combinatorics} 12(3) (2021), 479-514.

\bibitem{KM19} H. Kaul and J. Mudrock, On the chromatic polynomial and counting DP-colorings of graphs, \emph{Adv. in Applied Mathematics} 123 (2021), article 103121.

\bibitem{KN16} R. Kirov and R. Naimi, List coloring and $n$-monophilic graphs, \emph{Ars Combinatoria} 124 (2016), 329-340.

\bibitem{AS90} A. V. Kostochka and A. Sidorenko, Problem Session of the Prachatice Conference on Graph Theory, \emph{Fourth Czechoslovak Symposium on Combinatorics, Graphs and Complexity}, Ann. Discrete Math. 51 (1992), 380.

\bibitem{KM216} J. Mudrock, On the list coloring problem and its equitable variants, Ph.D. Thesis, Illinois Institute of Technology, 2018.

\bibitem{T09} C. Thomassen, The chromatic polynomial and list colorings, \emph{J. Combin. Th. Series B} 99 (2009), 474-479.

\bibitem{V76} V. G. Vizing, Coloring the vertices of a graph in prescribed colors, \emph{Diskret. Analiz.} no. 29, \emph{Metody Diskret. Anal. v Teorii Kodovi Skhem} 101 (1976), 3-10.

\bibitem{WQ17} W. Wang, J. Qian, and Z. Yan, When does the list-coloring function of a graph equal its chromatic polynomial, \emph{J. Combin. Th. Series B} 122 (2017), 543-549.

\bibitem{W01} D. B. West, (2001) \emph{Introduction to Graph Theory}.  Upper Saddle River, NJ: Prentice Hall.

}

\end{thebibliography}
\end{document}